\numberwithin{equation}{section}
\numberwithin{figure}{section}
\theoremstyle{plain}
\newtheorem{thm}{\protect\theoremname}
\theoremstyle{remark}
\newtheorem{rem}[thm]{\protect\remarkname}
\theoremstyle{definition}
\newtheorem{example}[thm]{\protect\examplename}
\providecommand{\examplename}{Example}
\providecommand{\remarkname}{Remark}
\providecommand{\theoremname}{Theorem}
\begin{document}
\title{On Certain Bounds for Genus $0$ Entire Functions }
\author{Ruiming Zhang}
\email{ruimingzhang@guet.edu.cn}
\address{School of Mathematics and Computing Sciences\\
Guilin University of Electronic Technology\\
Guilin, Guangxi 541004, P. R. China. }
\subjclass[2000]{33C10; 30C15.}
\keywords{Genus $0$ entire functions; bounds on the half-plane; modified Bessel
functions; Airy function.}
\begin{abstract}
In this work we use an elementary method to derive an upper bound
on the right half-plane for genus $0$ entire functions if it has
only negative zeros. The bound only uses information of the function
on the positive real axis. Applications to the modified Bessel functions
and the Airy function are also provided.
\end{abstract}

\maketitle

\section{Introduction}

It is well-known that many transcendental special functions are entire
functions of order at most $1$ with only real zeros. This class of
entire functions are closely related to genus $0$ entire functions
with only negative zeros, \cite{AndrewsAskeyRoy,DLMF,Ismail,WhattakerWatson}.
In this work we use elementary method to derive an upper bound for
genus $0$ entire functions on the right-half plane in terms of their
properties on the positive real line, and apply the bound to the modified
Bessel functions and the Airy function.

Let \cite{Boas}
\begin{equation}
f(z)=\sum_{n=0}^{\infty}a_{n}z^{n},\quad z\in\mathbb{C}\label{eq:1.1}
\end{equation}
be an entire function, then its order $\rho$ can be found by 

\begin{equation}
{\displaystyle \rho=\limsup_{r\to\infty}\frac{\log\left(\log\left(\max\left\{ \left|f(z)\right|:|z|\le r\right\} \right)\right)}{\log(r)}}\label{eq:1.2}
\end{equation}
or
\begin{equation}
\rho=\limsup_{n\to\infty}\frac{n\log n}{-\log|a_{n}|}.\label{eq:1.3}
\end{equation}
For any entire function $f(z)$ of finite order it is well-known that
it has the Hadamard's canonical representation,

\begin{equation}
{\displaystyle f(z)=z^{m}e^{P(z)}\prod_{n=1}^{\infty}\left(1-\frac{z}{z_{n}}\right)\exp\left(\frac{z}{z_{n}}+\cdots+\frac{1}{p}\left(\frac{z}{z_{n}}\right)^{p}\right),}\label{eq:1.4}
\end{equation}
where ${\displaystyle z_{k}}$ are nonzero roots of ${\displaystyle f}$,
${\displaystyle m}$ the multiplicity of root $0$, ${\displaystyle P}$
a polynomial with degree $q$, and ${\displaystyle p}$ the smallest
nonnegative integer such that the series

\begin{equation}
{\displaystyle \sum_{n=1}^{\infty}\frac{1}{|z_{n}|^{p+1}}<\infty.}\label{eq:1.5}
\end{equation}
The genus of $f(z)$ is defined as ${\displaystyle g=\max\{p,q\}}$.
If the order $\rho$ is not an integer, then ${\displaystyle g=\left\lfloor \rho\right\rfloor }$,
if $\rho\in\mathbb{N}$, then we may have ${\displaystyle g=\rho-1}$
or ${\displaystyle g=\rho}$. In particular, if $f(z)$ is of order
$\rho<1$, then it has genus $0$.

Clearly, an even entire function $g(z)$ has only real zeros if and
only if $f(z)=g(i\sqrt{z})$ has only negative zeros. By (\ref{eq:1.2})
it is evident that if $g(z)$ has a finite order, then $g(z)g(-z)$
is an even entire function with the same order. If $g(z)$ is an even
entire function of order $\rho<\infty$, by (\ref{eq:1.3}) $f(z)=g(i\sqrt{z})$
is an entire function of order $\frac{\rho}{2}$. 

\section{Main Result}
\begin{thm}
\label{thm:1}Let $f(z)$ be an entire function of order $\rho_{0}$
with $0\le\rho_{0}<1$, and assume that it has only negative zeros.
Then for any $\rho$ and $z\in\mathbb{C}$ with $\rho_{0}<\rho<1,\left|\arg z\right|<\frac{\pi}{2}$,
\begin{equation}
1\le\left|\frac{f\left(\Re(z)\right)}{f(0)}\right|\le\left|\frac{f(z)}{f(0)}\right|\le\exp\left(\frac{(\rho/e)^{\rho}\left|z\right|^{\rho}}{\cos^{1-\rho}(\arg z)\Gamma(1+\rho)}\int_{0}^{\infty}\frac{f'(x)}{f(x)}\frac{dx}{x^{\rho}}\right).\label{eq:2.1}
\end{equation}
\end{thm}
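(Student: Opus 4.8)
The plan is to reduce everything to the Hadamard factorization and then to a single-factor estimate. Since $0\le\rho_0<1$ the function has genus $0$, and as $f(0)\neq0$ the canonical product carries neither an exponential factor nor a zero at the origin; writing the negative zeros as $-x_n$ with $x_n>0$ gives $f(z)/f(0)=\prod_{n\ge1}(1+z/x_n)$ and, for $x>0$, $f'(x)/f(x)=\sum_{n\ge1}(x+x_n)^{-1}$. Because $|\arg z|<\tfrac{\pi}{2}$ forces $\Re z>0$, no factor vanishes and the principal logarithm is available, so $\log|f(z)/f(0)|=\sum_{n\ge1}\log|1+z/x_n|$. First I would dispatch the two left-hand inequalities from this product: with $\Re z>0$ each factor satisfies $1+\Re(z)/x_n\ge1$, giving $f(\Re z)/f(0)=\prod_n(1+\Re(z)/x_n)\ge1$, while $|1+z/x_n|=\sqrt{(1+\Re(z)/x_n)^2+(\Im(z)/x_n)^2}\ge1+\Re(z)/x_n$ multiplies up to $|f(z)/f(0)|\ge f(\Re z)/f(0)$.

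The heart of the matter is the right-hand bound, which I would obtain term by term. Writing $z/x_n=(r/x_n)e^{i\theta}$ with $r=|z|$, $\theta=\arg z$, and integrating $\Re\bigl(e^{i\theta}/(1+se^{i\theta})\bigr)$ along the ray gives the exact identity
\[
\log\Bigl|1+\tfrac{z}{x_n}\Bigr|=\int_0^{r/x_n}\frac{(s+\cos\theta)\,ds}{(s+\cos\theta)^2+\sin^2\theta}.
\]
The target for each $n$ is $\log|1+z/x_n|\le\frac{(\rho/e)^\rho r^\rho}{\cos^{1-\rho}(\theta)\,\Gamma(1+\rho)}\int_0^\infty\frac{x^{-\rho}}{x+x_n}\,dx$, after which summation finishes the proof. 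Two ingredients must be produced: the power of $\cos\theta$ and the factor $(\rho/e)^\rho$. The key point I expect is that the $\sin^2\theta$ in the denominator has to be retained: discarding it only yields $\cos^{-\rho}\theta$, which already overshoots the stated $\cos^{-(1-\rho)}\theta$ envelope once $\rho>\tfrac12$ (near the imaginary axis), whereas keeping it is exactly what upgrades the estimate to $\cos^{-(1-\rho)}\theta$. The factor $(\rho/e)^\rho$ I would extract from the elementary inequality $\log w\le w/e$ (equivalently $\log y\le y^\alpha/(e\alpha)$ for $\alpha>0$), optimized over a free scaling parameter after the integrand has been dominated by a multiple of $s^{\rho-1}$.

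To convert the constants into the stated form I would evaluate the Beta-type integral $\int_0^\infty\frac{u^{-\rho}}{1+u}\,du=\frac{\pi}{\sin\pi\rho}=\Gamma(\rho)\Gamma(1-\rho)$, valid precisely for $0<\rho<1$, so that $\int_0^\infty\frac{x^{-\rho}}{x+x_n}\,dx=x_n^{-\rho}\,\pi/\sin\pi\rho$, and then, via $\Gamma(1+\rho)=\rho\Gamma(\rho)$, the whole coefficient collapses to $(\rho/e)^\rho\Gamma(1-\rho)/\rho$ times $(r/x_n)^\rho\cos^{-(1-\rho)}\theta$. Summing over $n$ and interchanging the sum with $\int_0^\infty(\cdot)\,x^{-\rho}\,dx$ by Tonelli (all integrands are positive) recovers $\sum_n\int_0^\infty\frac{x^{-\rho}}{x+x_n}\,dx=\int_0^\infty\frac{f'(x)}{f(x)}x^{-\rho}\,dx$, the factor appearing in the statement; the hypothesis $\rho_0<\rho$ guarantees $\sum_n x_n^{-\rho}<\infty$, since the exponent of convergence of the zeros is at most $\rho_0$, so every quantity is finite.

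The main obstacle is the constant bookkeeping in the single-factor estimate: forcing the exponent of $\cos\theta$ to be exactly $1-\rho$ (which dictates precisely how the $\sin^2\theta$ term must be used) while simultaneously producing the constant $(\rho/e)^\rho/\Gamma(1+\rho)$ rather than a cruder one. Everything else—the factorization, the two left inequalities, the evaluation of the Beta integral, and the monotone interchange—is routine; the delicate step is selecting the pointwise bound on the integrand so that, after integrating $s^{\rho-1}$ from $0$ to $r/x_n$ and applying $\log w\le w/e$, exactly the advertised coefficient emerges.
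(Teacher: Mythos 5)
Your reduction of the theorem to a single\-/factor estimate is legitimate: since the stated bound is linear in the zero counting data (the right\-/hand side equals $\frac{(\rho/e)^{\rho}\Gamma(1-\rho)}{\rho}\,\frac{|z|^{\rho}}{\cos^{1-\rho}(\arg z)}\sum_{n}x_{n}^{-\rho}$ after evaluating the Beta integral), proving $\log|1+z/x_{n}|\le\frac{(\rho/e)^{\rho}\Gamma(1-\rho)}{\rho}\,\frac{(|z|/x_{n})^{\rho}}{\cos^{1-\rho}(\arg z)}$ for each $n$ and summing would indeed finish the job, and your treatment of the two left\-/hand inequalities, the Beta integral, and the Tonelli interchange is fine. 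This is genuinely different from the paper, which argues globally: it sets $\varphi(t)=\sum_{n}e^{-x_{n}t}$, establishes $\log\frac{f(z)}{f(0)}=\int_{0}^{\infty}\frac{1-e^{-zt}}{t}\varphi(t)\,dt$ on $\Re z>0$ by analytic continuation from the positive axis, and estimates that integral via $\sup_{t>0}t^{\rho}\varphi(t)\le(\rho/e)^{\rho}\sum_{n}x_{n}^{-\rho}$ together with $|1-e^{-zt}|\le(1-e^{-t\Re z})/\cos(\arg z)$. The problem is that the step you yourself call the heart of the matter --- the single\-/factor inequality --- is never proved. Everything about it is in the conditional (``I would obtain'', ``I expect'', ``I would extract''), and the proposed mechanism (retain $\sin^{2}\theta$, dominate the integrand by a multiple of $s^{\rho-1}$, apply $\log w\le w/e$ with a free scaling parameter) is never shown to produce the exact constant $(\rho/e)^{\rho}/\Gamma(1+\rho)$; you explicitly defer this as ``the delicate step.'' As written, the proposal is an outline with a gap precisely at its crux.

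The gap is closable, and more simply than your sketch suggests; in particular your ``key point'' that the $\sin^{2}\theta$ term must be retained is a red herring. Since $|1+z/x_{n}|\le 1+|z|/x_{n}$ and $\cos^{1-\rho}(\arg z)\le 1$, the angular factor plays no role at the single\-/factor level, and the whole theorem reduces to the one\-/variable inequality $\log(1+s)\le\frac{(\rho/e)^{\rho}\Gamma(1-\rho)}{\rho}\,s^{\rho}$ for $s>0$. That inequality is exactly the paper's argument in miniature: write $\log(1+s)=\int_{0}^{\infty}\frac{1-e^{-st}}{t}e^{-t}\,dt$, insert $t^{\rho}e^{-t}\le(\rho/e)^{\rho}$, and evaluate $\int_{0}^{\infty}(1-e^{-st})t^{-1-\rho}\,dt=s^{\rho}\Gamma(1-\rho)/\rho$. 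Without a proof of this (or an equivalent) elementary estimate, your argument does not establish the theorem; with it, your factor\-/by\-/factor route works and in fact yields the slightly stronger conclusion in which $\cos^{1-\rho}(\arg z)$ is replaced by $1$.
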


\begin{proof}
Since $f(z)$ has order $\rho_{0}<1$, then it is of genus $0$. Since
it has only negative zeros $\left\{ -z_{n}\right\} _{n\in\mathbb{N}}$,
\begin{equation}
0<z_{1}\le z_{2}\le\dots,\label{eq:2.2}
\end{equation}
then 
\begin{equation}
\frac{f(z)}{f(0)}=\prod_{n=1}^{\infty}\left(1+\frac{z}{z_{n}}\right).\label{eq:2.3}
\end{equation}
By \cite[Theorem 2.5.18]{Boas} we also have
\begin{equation}
\sum_{n=1}^{\infty}\frac{1}{z_{n}^{\rho}}<\infty,\quad\forall\rho_{0}<\rho.\label{eq:2.4}
\end{equation}
Then for $z=x+iy$ with $x\ge0$,
\begin{equation}
\begin{aligned} & \left|\frac{f(z)}{f(0)}\right|^{2}=\prod_{n=1}^{\infty}\left|1+\frac{z}{z_{n}}\right|^{2}=\prod_{n=1}^{\infty}\left(\left(1+\frac{x}{z_{n}}\right)^{2}+\frac{y^{2}}{z_{n}^{2}}\right)\\
 & \ge\prod_{n=1}^{\infty}\left(1+\frac{x}{z_{n}}\right)^{2}=\left|\frac{f(x)}{f(0)}\right|^{2}\ge1,
\end{aligned}
\label{eq:2.5}
\end{equation}
which shows 
\begin{equation}
\left|\frac{f(z)}{f(0)}\right|\ge\left|\frac{f(x)}{f(0)}\right|\ge1,\quad x=\Re z\ge0.\label{eq:2.6}
\end{equation}

Let 
\begin{equation}
\varphi(t)=\sum_{n=1}^{\infty}e^{-z_{n}t},\quad t>0,\label{eq:2.7}
\end{equation}
then for any $\rho>\rho_{0}\ge0$,
\begin{equation}
\int_{0}^{\infty}\varphi(t)t^{\rho-1}dt=\sum_{n=1}^{\infty}\int_{0}^{\infty}e^{-z_{n}t}t^{\rho-1}dt=\sum_{n=1}^{\infty}\frac{\Gamma(\rho)}{z_{n}^{\rho}}<\infty,\label{eq:2.8}
\end{equation}
where $\Gamma(z)$ is the Euler Gamma function, \cite[(5.2.1)]{DLMF}. 

On the other hand, since
\begin{equation}
t^{\rho}\varphi(t)=\sum_{n=1}^{\infty}\frac{1}{z_{n}^{\rho}}\left((tz_{n})^{\rho}e^{-z_{n}t}\right)\le\sup_{t>0}\left(t^{\rho}e^{-t}\right)\sum_{n=1}^{\infty}\frac{1}{z_{n}^{\rho}}=\left(\frac{\rho}{e}\right)^{\rho}\sum_{n=1}^{\infty}\frac{1}{z_{n}^{\rho}},\label{eq:2.9}
\end{equation}
then
\begin{equation}
\sup_{t>0}t^{\rho}\varphi(t)\le\left(\frac{\rho}{e}\right)^{\rho}\sum_{n=1}^{\infty}\frac{1}{z_{n}^{\rho}}.\label{eq:2.10}
\end{equation}

For any $x>0$, it is clear that 
\begin{equation}
\frac{f'(x)}{f(x)}=\sum_{n=1}^{\infty}\frac{1}{x+z_{n}}=\int_{0}^{\infty}e^{-xt}\varphi(t)dt,\label{eq:2.11}
\end{equation}
then 
\begin{equation}
\begin{aligned} & \int_{0}^{\infty}\frac{f'(x)}{f(x)}x^{-\rho}dx=\int_{0}^{\infty}\left(\int_{0}^{\infty}e^{-xt}x^{-\rho}dx\right)\varphi(t)dt\\
 & =\Gamma(1-\rho)\int_{0}^{\infty}t^{\rho-1}\varphi(t)dt=\Gamma(1-\rho)\Gamma(\rho)\sum_{n=1}^{\infty}\frac{1}{z_{n}^{\rho}}<\infty.
\end{aligned}
\label{eq:2.12}
\end{equation}
Therefore,
\begin{equation}
\sup_{t>0}t^{\rho}\left|\varphi(t)\right|\le\frac{\left(\rho/e\right)^{\rho}}{\Gamma(1-\rho)\Gamma(\rho)}\int_{0}^{\infty}\frac{f'(x)}{f(x)}\frac{dx}{x^{\rho}}.\label{eq:2.13}
\end{equation}
Since for $x>0$ 
\begin{equation}
0<1-e^{-x}\le\int_{0}^{x}e^{-u}du<x,\label{eq:2.14}
\end{equation}
 then for any $z\in\mathbb{C}$ with $\left|\arg z\right|<\frac{\pi}{2}$,
\begin{equation}
1-e^{-z}=\int_{0}^{z}e^{-u}du=e^{i\arg z}\int_{0}^{\left|z\right|}e^{-ue^{i\arg z}}du\label{eq:2.15}
\end{equation}
and
\begin{equation}
\left|1-e^{-z}\right|\le\int_{0}^{\left|z\right|}e^{-u\cos(\arg z)}du=\frac{1-e^{-\Re(z)}}{\cos(\arg z)}.\label{eq:2.16}
\end{equation}
Integrate (\ref{eq:2.11}) from $0$ to $x>0$, by (\ref{eq:2.8}),
(\ref{eq:2.14}) and Fubini's theorem,
\begin{equation}
\log\frac{f(x)}{f(0)}=\int_{0}^{\infty}\frac{1-e^{-xt}}{t}\varphi(t)dt.\label{eq:2.17}
\end{equation}

On one hand, $\log\frac{f(z)}{f(0)}$ is analytic in $\Re(z)>0$ because
the entire function $f(z)$ has only negative zeros. 

On the other hand, by (\ref{eq:2.14}), for $z$ with $\Re(z)>0$,
\begin{equation}
\begin{aligned} & \left|\int_{0}^{\infty}\frac{1-e^{-zt}}{t}\varphi(t)dt\right|\le\int_{0}^{\infty}\left|\frac{1-e^{-zt}}{t}\right|\varphi(t)dt\\
 & \le\int_{0}^{\infty}\left(\frac{1-e^{-t\Re(z)}}{t\cos(\arg z)}\right)\varphi(t)dt\le\frac{\Re(z)}{\cos(\arg z)}\int_{0}^{\infty}\varphi(t)dt=\left|z\right|\sum_{n=1}^{\infty}\frac{1}{z_{n}}<\infty,
\end{aligned}
\label{eq:2.18}
\end{equation}
then $\int_{0}^{\infty}\frac{1-e^{-zt}}{t}\varphi(t)dt$ converges
uniformly on any compact set of $\Re(z)>0$, thus it defines an analytic
function on the right half-plane. Since the two analytic functions
$\log\frac{f(z)}{f(0)}$ and $\int_{0}^{\infty}\frac{1-e^{-zt}}{t}\varphi(t)dt$
agree for $x>0$ through (\ref{eq:2.17}), then we must have

\begin{equation}
\log\frac{f(z)}{f(0)}=\int_{0}^{\infty}\frac{1-e^{-zt}}{t}\varphi(t)dt\label{eq:2.19}
\end{equation}
throughout $\Re(z)>0$ by analytic continuation. 

Since 
\[
\int_{0}^{\infty}\frac{1-e^{-t}}{t^{\rho+1}}dt=\frac{1}{\rho}\int_{0}^{\infty}e^{-t}t^{-\rho}dt=\frac{\Gamma(1-\rho)}{\rho},
\]
then for any $z\in\mathbb{C}$ with $\left|\arg z\right|<\frac{\pi}{2}$
by (\ref{eq:2.13}), 

\begin{equation}
\begin{aligned} & \left|\log\frac{f(z)}{f(0)}\right|=\left|\int_{0}^{\infty}\frac{1-e^{-zt}}{t}\varphi(t)dt\right|\\
 & \le\int_{0}^{\infty}\left|\frac{1-e^{-zt}}{t}\varphi(t)\right|dt\le\frac{1}{\cos(\arg z)}\int_{0}^{\infty}\frac{1-e^{-t\Re(z)}}{t}\varphi(t)dt\\
 & =\frac{1}{\cos(\arg z)}\int_{0}^{\infty}\frac{1-e^{-t\Re(z)}}{t^{1+\rho}}t^{\rho}\varphi(t)dt\le\frac{\sup_{t>0}t^{\rho}\varphi(t)}{\cos(\arg z)}\int_{0}^{\infty}\frac{1-e^{-t\Re(z)}}{t^{1+\rho}}dt\\
 & =\frac{\left(\Re(z)\right)^{\rho}\sup_{t>0}t^{\rho}\varphi(t)}{\cos(\arg z)}\int_{0}^{\infty}\frac{1-e^{-t}}{t^{1+\rho}}dt=\frac{\left(\Re(z)\right)^{\rho}\sup_{t>0}t^{\rho}\varphi(t)}{\cos(\arg z)}\frac{\Gamma(1-\rho)}{\rho}\\
 & \le\frac{(\rho/e)^{\rho}\left|z\right|^{\rho}}{\cos^{1-\rho}(\arg z)\Gamma(1+\rho)}\int_{0}^{\infty}\frac{f'(x)}{f(x)}\frac{dx}{x^{\rho}}.
\end{aligned}
\label{eq:2.20}
\end{equation}
 Therefore,
\begin{equation}
\log\left|\frac{f(z)}{f(0)}\right|\le\frac{(\rho/e)^{\rho}\left|z\right|^{\rho}}{\cos^{1-\rho}(\arg z)\Gamma(1+\rho)}\int_{0}^{\infty}\frac{f'(x)}{f(x)}\frac{dx}{x^{\rho}}.\label{eq:2.21}
\end{equation}
\end{proof}
\begin{rem}
There are many nice monotone properties associated with functions
$\frac{f(z)}{f(0)}$ defined in (\ref{eq:2.3}). For example, if $f(0)>0$,
then $\log\frac{f(x)}{f(0)}$ is a Bernstein function, consequently
both $\frac{f'(x)}{f(x)}$ and 
\begin{equation}
\frac{1}{f^{\beta}(x)}=\frac{1}{f^{\beta}(0)}\exp\left(-\beta\log\frac{f(x)}{f(0)}\right),\quad\beta>0\label{eq:2.22}
\end{equation}
are completely monotonic in $x\in(0,\infty)$. Therefore, they are
Laplace transforms of a finite nonnegative measure, \cite{Schilling}.
Then for $z=x+iy$ with $x>0$ we automatically have
\begin{equation}
\left|\left(\frac{1}{f^{\beta}(z)}\right)^{(n)}\right|\le(-1)^{n}\left(\frac{1}{f^{\beta}(x)}\right)^{(n)},\quad\beta>0,\ n\in\mathbb{N}_{0}\label{eq:2.23}
\end{equation}
 and
\begin{equation}
\left|\left(\frac{f'(z)}{f(z)}\right)^{(n)}\right|\le(-1)^{n}\left(\frac{f'(x)}{f(x)}\right)^{(n)},\quad n\in\mathbb{N}_{0}.\label{eq:2.24}
\end{equation}
 On the other hand, for any $x,y\in\mathbb{R}$ by the Jensen's theorem,
\cite{Gasper2}
\begin{equation}
\partial_{y}^{2}\left|f(x+iy)\right|^{2}\ge0,\quad y\partial_{y}\left|f(x+iy)\right|^{2}\ge0.\label{eq:2.25}
\end{equation}
\end{rem}

\section{Examples}
\begin{example}
For $\nu>-1$ the Bessel function of the first kind $J_{\nu}(z)$
is defined by \cite[(10.2.2)]{DLMF} \cite{AndrewsAskeyRoy,Ismail,WhattakerWatson}
\begin{equation}
\frac{J_{\nu}(z)}{(z/2)^{\nu}}=\sum_{k=0}^{\infty}\frac{(-1)^{k}(z/2)^{2k}}{k!\Gamma(\nu+k+1)}.\label{eq:3.1}
\end{equation}
The modified Bessel function of the first kind is given by \cite[(10.27.6)]{DLMF}
\begin{equation}
I_{\nu}(z)=i^{-\nu}J_{\nu}(iz).\label{eq:3.2}
\end{equation}
$\frac{J_{\nu}(z)}{(z/2)^{\nu}}$ is an even entire function of order
$1$ with only real zeros. Then
\begin{equation}
f(z)=\frac{I_{\nu}(\sqrt{z})}{(\sqrt{z}/2)^{\nu}},\quad\nu>-1\label{eq:3.3}
\end{equation}
is an entire function of order $\frac{1}{2}$ with only negative zeros. 

For any $1>\rho>\frac{1}{2}$ and $\left|\arg z\right|<\frac{\pi}{2}$,
by (\ref{eq:2.1}) we get
\begin{equation}
1\le\left|\frac{I_{\nu}(\sqrt{z})}{(\sqrt{z}/2)^{\nu}}\right|\le\exp\left(a_{\rho}(\nu,z)\left|z\right|^{\rho}\right),\label{eq:3.4}
\end{equation}
 where
\begin{equation}
a_{\rho}(\nu,z)=\frac{(\rho/e)^{\rho}}{\cos^{1-\rho}(\arg z)\Gamma(1+\rho)}\int_{0}^{\infty}\left(\frac{I_{\nu-1}(\sqrt{x})}{2\sqrt{x}I_{\nu}(\sqrt{x}}-\frac{3\nu}{2x}\right)\frac{dx}{x^{\rho}}.\label{eq:3.5}
\end{equation}
\end{example}

\begin{example}
The Airy function $\text{Ai}(z)$ can be defined by \cite[(9.6.2)]{DLMF},\cite{WhattakerWatson}
\begin{equation}
\text{Ai}(z)=\frac{\sqrt{z}}{3}\left(I_{-1/3}\left(\frac{2}{3}z^{3/2}\right)-I_{1/3}\left(\frac{2}{3}z^{3/2}\right)\right).\label{eq:3.6}
\end{equation}
It is an entire function of order $\frac{3}{2}$ with only negative
zeros. Then $\text{Ai}(z)\text{Ai}(-z)$ is an even entire function
of order $\frac{3}{2}$ with only real zeros. Hence,
\begin{equation}
f(z)=\text{Ai}(i\sqrt{z})\text{Ai}(-i\sqrt{z})\label{eq:3.7}
\end{equation}
 is an entire function of order $\frac{3}{4}$ with only negative
zeros. Since by \cite[(9.2.3)]{DLMF} we have
\begin{equation}
f(0)=\text{Ai}^{2}(0)=\frac{1}{3^{4/3}\Gamma^{2}(2/3)},\label{eq:3.8}
\end{equation}
then for $1>\rho>\frac{3}{4}$ and $\left|\arg z\right|<\frac{\pi}{2}$,
by (\ref{eq:2.1}),
\begin{equation}
\frac{1}{3^{4/3}\Gamma^{2}(2/3)}\le\left|\text{Ai}(i\sqrt{z})\text{Ai}(-i\sqrt{z})\right|\le\frac{\exp\left(b_{\rho}(z)\left|z\right|^{\rho}\right)}{3^{4/3}\Gamma^{2}(2/3)},\label{eq:3.9}
\end{equation}
where
\begin{equation}
b_{\rho}(z)=\frac{2(\rho/e)^{\rho}}{\cos^{1-\rho}(\arg z)\Gamma(1+\rho)}\int_{0}^{\infty}\frac{d}{dx}\log\left(\left|\text{Ai}(i\sqrt{x})\right|\right)\frac{dx}{x^{\rho}}.\label{eq:3.10}
\end{equation}
\end{example}

\begin{example}
The modified Bessel function $K_{\nu}(z)$ is defined by \cite[(10.27.4)]{DLMF},\cite{AndrewsAskeyRoy,Gasper2,Ismail,WhattakerWatson}
\begin{equation}
K_{\nu}(z)=\frac{\pi}{2}\frac{I_{-\nu}(z)-I_{\nu}(z)}{\sin(\nu\pi)}.\label{eq:3.11}
\end{equation}
For any $z$ with $\left|\arg(z)\right|<\frac{\pi}{2}$ it has the
integral representation \cite[(10.32.9)]{DLMF}, 
\begin{align}
K_{\nu}(z) & =\frac{1}{2}\int_{-\infty}^{\infty}\exp\left(-z\cosh u-\nu u\right)du.\label{eq:3.12}
\end{align}
In particular, for any $a>0$, as a function of variable $z$, $K_{iz}\left(a\right)$
is an even order $1$ entire function that has only real zeros, \cite{Gasper2}.
Consequently, 
\begin{equation}
f(z)=K_{\sqrt{z}}\left(a\right)=K_{-\sqrt{z}}\left(a\right),\quad f(0)=K_{0}(a)>0.\label{eq:3.13}
\end{equation}
is an entire function of order $\frac{1}{2}$ with only negative zeros. 

Then by (\ref{eq:2.1}) for $\frac{1}{2}<\rho<1$ and $z$ with $\left|\arg z\right|<\frac{\pi}{2}$
we have
\begin{equation}
1\le\left|\frac{K_{\sqrt{z}}\left(a\right)}{K_{0}\left(a\right)}\right|\le\exp\left(c_{\rho}(z,a)\left|z\right|^{\rho}\right),\label{eq:3.14}
\end{equation}
where
\begin{equation}
c_{\rho}(z,a)=\frac{(\rho/e)^{\rho}}{\Gamma(1+\rho)\cos^{1-\rho}(\arg z)}\int_{0}^{\infty}\frac{d}{dx}\left(\log K_{\sqrt{x}}\left(a\right)\right)\frac{dx}{x^{\rho}}\label{eq:3.15}
\end{equation}
\end{example}

\end{document}